\numberwithin{equation}{section}
\numberwithin{figure}{section}
\theoremstyle{plain}
\newtheorem{thm}{\protect\theoremname}
\theoremstyle{definition}
\newtheorem{example}[thm]{\protect\examplename}
\theoremstyle{plain}
\newtheorem{conjecture}[thm]{\protect\conjecturename}
\theoremstyle{plain}
\newtheorem{lem}[thm]{\protect\lemmaname}
\theoremstyle{remark}
\newtheorem{rem}[thm]{\protect\remarkname}
\providecommand{\conjecturename}{Conjecture}
\providecommand{\examplename}{Example}
\providecommand{\lemmaname}{Lemma}
\providecommand{\remarkname}{Remark}
\providecommand{\theoremname}{Theorem}
\begin{document}
\address[Minoru Hirose]{Institute for Advanced Research, Nagoya University,  Furo-cho, Chikusa-ku, Nagoya, 464-8602, Japan}
\email{minoru.hirose@math.nagoya-u.ac.jp}
\subjclass[2010]{11M32, 11M35, 33E20}
\title{Multitangent functions and symmetric multiple zeta values}
\author{Minoru Hirose}
\begin{abstract}
In this paper, we give a formula that connects two variants of multiple
zeta values; multitangent functions and symmetric multiple zeta values.
As an application of this formula, we give two results. First, we
prove Bouillot's conjecture on the structures of the algebra of multitangent
functions. Second, we prove an analogue of the linear part of Kawashima's
relation for symmetric multiple zeta values.
\end{abstract}

\maketitle

\section{Introduction}

Multiple zeta values are real numbers defined by 
\[
\zeta(k_{1},\dots,k_{r})=\sum_{0<m_{1}<\cdots<m_{r}}\frac{1}{m_{1}^{k_{1}}\cdots m_{r}^{k_{r}}}
\]
for positive integers $k_{1},\dots,k_{r}$ with $k_{r}\geq2$. The
purpose of this paper is to find a new relationship between two variants
of multiple zeta values (multitangent functions and symmetric multiple
zeta values) that seem to have been studied in slightly different
contexts.

\subsection{Multitangent functions}

Multitangent functions are functions of $z\in\mathbb{C}\setminus\mathbb{Z}$
defined by the absolutely convergent series
\[
\Psi_{k_{1},\dots,k_{d}}(z)=\sum_{-\infty<m_{1}<\cdots<m_{d}<\infty}\frac{1}{(z+m_{1})^{k_{1}}\cdots(z+m_{d})^{k_{d}}}
\]
where $k_{1},\dots,k_{r}$ are positive integers with $k_{1},k_{d}\geq2$.
The multitangent functions appear in the theory of resurgent functions
\cite{Bou11InvAna,BouEca12,EcaLFR2} and the theory of multiple Eisenstein
series \cite{Bach15Thesis}. In \cite{Bou14AlgMTGF}, Bouillot studied
the algebraic structure of multiple tangent functions and obtained
the following results. We extend $\Psi_{\bm{k}}(z)$ to the case where
$\bm{k}$ is a formal sum of indices whose first and last components
are greater than $1$. Furthermore, let $\bm{k}*\bm{l}$ denote the
harmonic product of $\bm{k}$ and $\bm{l}$, e.g. $(a)*(b)=(a,b)+(b,a)+(a+b)$.
We call the $d=1$ case $\Psi_{k}(z)=\sum_{m=-\infty}^{\infty}(z+m)^{-k}$
as a monotangent function. For the empty index $\emptyset$, we put
$\zeta(\emptyset)=1$.
\begin{thm}[{\cite[Property 3, Theorem 3]{Bou14AlgMTGF} }]
\label{thm:Bou}
\end{thm}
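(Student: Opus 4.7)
The plan is to prove the two algebraic statements I read Bouillot's theorem as encompassing: a stuffle identity $\Psi_{\bm{k}}(z)\,\Psi_{\bm{l}}(z)=\Psi_{\bm{k}*\bm{l}}(z)$ governing products of multitangents, and a reduction formula expressing every $\Psi_{\bm{k}}(z)$ as a $\mathbb{Q}$-linear combination of monotangents $\Psi_{e}(z)$ with $e\geq 2$ whose coefficients lie in the $\mathbb{Q}$-algebra generated by multiple zeta values.

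The stuffle identity I would obtain directly from the defining series. Since both factors are absolutely convergent under the hypothesis that their outermost exponents exceed $1$, their product is an absolutely convergent double sum indexed by pairs $(\bm{m},\bm{n})$ of strictly increasing integer tuples. Partitioning those pairs according to the relative order of the entries $m_{i}$ and $n_{j}$, each order-type contributes to exactly one summand of $\bm{k}*\bm{l}$, with coincidences $m_{i}=n_{j}$ producing the "stuffed" exponents $k_{i}+l_{j}$ at a common lattice point; summing over each class recovers the identity term by term.

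For the projection formula, the natural device is the partial-fraction decomposition
\[
\prod_{i=1}^{d}\frac{1}{(X+a_{i})^{k_{i}}}=\sum_{i=1}^{d}\sum_{j=1}^{k_{i}}\frac{c_{i,j}(\bm{a},\bm{k})}{(X+a_{i})^{j}},
\]
with rational coefficients $c_{i,j}$ built from products of inverse gaps $(a_{i}-a_{i'})^{-1}$. Substituting $X=z$ and $a_{i}=m_{i}$ inside $\Psi_{\bm{k}}(z)$ and swapping the outer sum over $\bm{m}$ with the partial-fraction sum would formally express the multitangent as a combination of monotangents $\Psi_{j}(z)$ whose coefficients are iterated sums over the differences $m_{i'}-m_{i}$, i.e.\ multiple zeta values. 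The main obstacle is that this interchange is not a priori legitimate: individual partial-fraction contributions, especially those with $j=1$, are only conditionally, or not at all, summable. I expect this to be circumvented either by first grouping the partial-fraction pieces into absolutely summable symmetric combinations, or equivalently by introducing a regularisation and verifying that the formal coefficient of the divergent $\Psi_{1}(z)$ vanishes identically. Pinning down this cancellation and identifying the surviving coefficients as specific MZVs in terms of $\bm{k}$ is where the real analytic work lies; once settled, combining the decomposition with the stuffle identity yields the complete algebraic description.
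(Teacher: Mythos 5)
First, a structural point: the paper does not prove this statement at all --- Theorem \ref{thm:Bou} is quoted verbatim from Bouillot \cite{Bou14AlgMTGF} and used as a black box, so there is no internal proof to compare you against; I can only measure your sketch against what the cited result actually demands. Your argument for (iv) is correct and is the standard one: absolute convergence (guaranteed by $k_{1},k_{r},l_{1},l_{s}\geq2$) lets you multiply the two series and partition the pairs of increasing tuples by order type, and each order type, with coincidences producing the stuffed exponents, matches one term of $\bm{k}*\bm{l}$. You also silently omit (i)--(iii), which are routine (comparison with $\sum|m|^{-k}$ for the extreme variables, locally uniform convergence for term-by-term differentiation, and the substitution $m_{i}\mapsto-m_{d+1-i}$ for the parity statement), so I will not count that against you.

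The genuine gap is in (v), and you have put your finger on it yourself without closing it. The partial-fraction decomposition and the translation invariance of the gap sums (which is what identifies the coefficients as the shifted values $\zeta_{a}$) give the correct shape of the formula, but the terms with $j=1$ produce the divergent series $\Psi_{1}(z)=\sum_{m}(z+m)^{-1}$, and for the extreme positions the accompanying coefficient sums are themselves divergent MZV-type series; the naive interchange of summations is therefore not merely ``not a priori legitimate'' but false term by term. The entire content of Bouillot's reduction theorem is the proof that, after a consistent regularization, the total coefficient of $\Psi_{1}$ vanishes and the surviving coefficients are exactly the stated $(-1)^{k_{1}+\cdots+k_{j-1}+a}\zeta_{a}(k_{j-1},\dots,k_{1})\zeta_{b}(k_{j+1},\dots,k_{d})$ with $s\geq2$. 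This cancellation is a nontrivial identity among regularized multiple zeta values (it is where the hypothesis $k_{1},k_{d}\geq2$ and the structure of the $\zeta_{a}$ enter essentially), not something that follows from regrouping alone; writing ``I expect this to be circumvented'' is a statement of the problem, not a solution. Until that step is carried out, the reduction to monotangents --- and hence the part of the theorem that the present paper actually relies on in the proof of Theorem \ref{thm:main} --- is not established.
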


\begin{description}
\item [{(i)}] For $\bm{k}=(k_{1},\dots,k_{r})\in\mathbb{Z}_{\geq1}^{r}$
with $k_{1},k_{r}\geq2$, the function $\Psi_{\bm{k}}(z)$ is well-defined
and holomorphic on $\mathbb{C}\setminus\mathbb{Z}$.
\item [{(ii)}] For $\bm{k}=(k_{1},\dots,k_{r})\in\mathbb{Z}_{\geq1}$ with
$k_{1},k_{r}\geq2$ and $z\in\mathbb{C}\setminus\mathbb{Z}$, we have
\[
\frac{\partial\Psi_{\bm{k}}(z)}{\partial z}=-\sum_{i=1}^{r}k_{i}\Psi_{k_{1},\dots,k_{i-1},k_{i}+1,k_{i+1},\dots,k_{r}}(z).
\]
\item [{(iii)}] For $\bm{k}=(k_{1},\dots,k_{r})\in\mathbb{Z}_{\geq1}$
with $k_{1},k_{r}\geq2$ and $z\in\mathbb{C}\setminus\mathbb{Z}$,
we have
\[
\Psi_{\bm{k}}(-z)=(-1)^{k_{1}+\cdots+k_{r}}\Psi_{k_{r},\dots,k_{1}}(z).
\]
\item [{(iv)}] For $\bm{k}=(k_{1},\dots,k_{r})\in\mathbb{Z}_{\geq1}^{r}$
and $\bm{l}=(l_{1},\dots,l_{s})\in\mathbb{Z}_{\geq1}^{s}$ with $k_{1},k_{r},l_{1},l_{s}\geq2$
and $z\in\mathbb{C}\setminus\mathbb{Z}$, we have
\[
\Psi_{\bm{k}*\bm{l}}(z)=\Psi_{\bm{k}}(z)\Psi_{\bm{l}}(z).
\]
\item [{(v)}] For $\bm{k}=(k_{1},\dots,k_{r})\in\mathbb{Z}_{\geq1}^{r}$
with $k_{1},k_{r}\geq2$ and $z\in\mathbb{C}\setminus\mathbb{Z}$,
we have
\[
\Psi_{{\bf k}}(z)=\sum_{j=1}^{d}\sum_{\substack{a+s+b=k_{j}\\
a,b\geq0,\,s\geq2
}
}(-1)^{k_{1}+\cdots+k_{j-1}+a}\zeta_{a}(k_{j-1},\dots,k_{1})\zeta_{b}(k_{j+1},\dots,k_{d})\Psi_{s}(z)
\]
where we put
\begin{align*}
\zeta_{a}(k_{1},\dots,k_{d}) & =(-1)^{a}\sum_{\substack{e_{1}+\cdots+e_{d}=a\\
e_{1},\dots,e_{d}\geq0
}
}\zeta(k_{1}+e_{1},\dots,k_{d}+e_{d})\prod_{j=1}^{d}{k_{j}-1+e_{j} \choose e_{j}}.
\end{align*}
Thus, any multitangent function can be written as a $\mathbb{Q}$-linear
sum of products of multiple zeta values and monotangent functions.
\end{description}
\begin{example}
The case $\bm{k}=(2)$, $\bm{l}=(3)$ of (iv) implies 
\[
\Psi_{2}(z)\Psi_{3}(z)=\Psi_{2,3}(z)+\Psi_{3,2}(z)+\Psi_{5}(z).
\]
\end{example}

\begin{example}
The case $\bm{k}=(2,3)$ of (v) implies
\[
\Psi_{2,3}(z)=\sum_{\substack{a+s+b=2\\
a,b\geq0,\,s\geq2
}
}(-1)^{a}\zeta_{a}(\emptyset)\zeta_{b}(3)\Psi_{s}(z)+\sum_{\substack{a+s+b=3\\
a,b\geq0,\,s\geq2
}
}(-1)^{a+2}\zeta_{a}(2)\zeta_{b}(\emptyset)\Psi_{s}(z).
\]
Since $\zeta_{a}(\emptyset)=\delta_{a,0}$, the right hand side is
equal to
\[
\zeta_{0}(3)\Psi_{2}(z)-\zeta_{1}(2)\Psi_{2}(z)+\zeta_{0}(2)\Psi_{3}(z)=3\zeta(3)\Psi_{2}(z)+\zeta(2)\Psi_{3}(z).
\]
Thus,
\[
\Psi_{2,3}(z)=3\zeta(3)\Psi_{2}(z)+\zeta(2)\Psi_{3}(z).
\]
\end{example}

For an index $\bm{k}=(k_{1},\dots,k_{r})$, we call the sum $k_{1}+\cdots+k_{r}$
as the weight. We denote by $\mathsf{MTGF}_{w}$ (resp. $\mathsf{MZV}_{w}$)
the $\mathbb{Q}$-vector space generated by multitangent functions
(resp. multiple zeta values) of weight $w$, i.e.,
\[
\mathsf{MTGF}_{w}=\mathrm{span}_{\mathbb{Q}}\left\{ \Psi_{k_{1},\dots,k_{r}}(z)\left|\begin{array}{c}
r\geq1,\,k_{1},\dots,k_{r}\geq1,\\
k_{1},k_{r}\geq2,\,k_{1}+\cdots+k_{r}=w
\end{array}\right.\right\} \subset\left\{ \begin{array}{c}
\text{holomorphic functions }\\
\text{on }\mathbb{C}\setminus\mathbb{Z}
\end{array}\right\} ,
\]
\[
\mathsf{MZV}_{w}=\mathrm{span}_{\mathbb{Q}}\left\{ \zeta(k_{1},\dots,k_{r})\left|\begin{array}{c}
r\geq0,\,k_{1},\dots,k_{r}\geq1,\\
k_{r}\geq2\text{ if }r>0,\ k_{1}+\cdots+k_{r}=w
\end{array}\right.\right\} \subset\mathbb{R}.
\]
Then Theorem \ref{thm:Bou} (v) implies
\[
\mathsf{MTGF}_{w}\subset\bigoplus_{s=2}^{w}\mathsf{MZV}_{w-s}\cdot\Psi_{s}(z).
\]
Bouillot \cite{Bou14AlgMTGF} conjectured that this inclusion is actually
an equality.
\begin{conjecture}[{Bouillot, \cite[Conjecture 2]{Bou14AlgMTGF}}]
\label{conj:Bou}For $w\geq2$, we have
\[
\mathsf{MTGF}_{w}=\bigoplus_{s=2}^{w}\mathsf{MZV}_{w-s}\cdot\Psi_{s}(z).
\]
\end{conjecture}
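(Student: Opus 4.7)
The inclusion $\mathsf{MTGF}_w\subseteq\bigoplus_{s=2}^{w}\mathsf{MZV}_{w-s}\cdot\Psi_s(z)$ is exactly Theorem~\ref{thm:Bou}(v), and the sum on the right is already direct: the Laurent expansion of $\Psi_s(z)$ at $z=0$ begins with $z^{-s}$, so any relation $\sum_{s=2}^{w}\alpha_s\Psi_s(z)=0$ with $\alpha_s\in\mathbb{R}$ forces $\alpha_w=0$ by comparing pole orders, and inductively $\alpha_s=0$ for every $s$. Thus Conjecture~\ref{conj:Bou} amounts to the \emph{reverse} inclusion: for every $s\in\{2,\dots,w\}$ and every $\alpha\in\mathsf{MZV}_{w-s}$, the function $\alpha\cdot\Psi_s(z)$ must lie in $\mathsf{MTGF}_w$.

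Equivalently, I want to show that the $\mathbb{Q}$-linear projection
\[
\mathrm{coef}_s\colon\mathsf{MTGF}_w\longrightarrow\mathsf{MZV}_{w-s},\qquad\sum_{t=2}^{w}\alpha_t\Psi_t(z)\longmapsto\alpha_s
\]
(well defined on $\mathsf{MTGF}_w$ by the known inclusion) is surjective for every $s$. The case $s=w$ is trivial since $\Psi_w(z)\in\mathsf{MTGF}_w$. For $s<w$, my plan is to invoke the paper's new formula that identifies, for each admissible index $\bm{k}$ of weight $w$, the coefficient of $\Psi_s(z)$ appearing in Theorem~\ref{thm:Bou}(v) as a single symmetric multiple zeta value (SMZV). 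This reinterpretation is the conceptual linchpin: the raw formula~(v) presents the coefficient as an alternating sum of products of regularized MZVs whose $\mathbb{Q}$-span is opaque, but once recast as an SMZV the image of $\mathrm{coef}_s$ becomes a transparent $\mathbb{Q}$-span of SMZVs of weight $w-s$.

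The remaining task, and the main obstacle, is to prove that this span of SMZVs already exhausts $\mathsf{MZV}_{w-s}$. For this I would appeal to the analogue of the linear part of Kawashima's relation for symmetric MZVs---the paper's second main application---which supplies the linear identities needed to express an arbitrary MZV as a $\mathbb{Q}$-combination of the SMZVs appearing in the image of $\mathrm{coef}_s$. Combined with an induction on $w$ (and, within each weight, on the depth of the target MZV) and the regularization identities built into the definition of $\zeta_a(\cdot)$, this should close the argument. The delicate point is arranging the inductive step so that the combinatorial shape of the SMZVs produced by~(v) (governed by splitting $\bm{k}$ at a distinguished entry of size $s$) is rich enough, in conjunction with the new linear relations, to cover all of $\mathsf{MZV}_{w-s}$ rather than a proper subspace.
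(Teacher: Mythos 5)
Your setup is right, and you have correctly located the conceptual linchpin: Theorem~\ref{thm:main} reinterprets the $\Psi_{2}(z)$-coefficient of $\Psi_{\bm{k}}(z)$ as $(-1)^{\mathrm{wt}(\bm{k})}\zeta_{\shuffle}^{S}\bigl(({}_{\downarrow}\bm{k}_{\downarrow})^{\vee};0\bigr)$, and since ${}_{\downarrow}\bm{k}_{\downarrow}\mapsto({}_{\downarrow}\bm{k}_{\downarrow})^{\vee}$ hits every nonempty index, the image of your $\mathrm{coef}_{2}$ contains (modulo the higher $\Psi_{s}$) the span of all symmetric multiple zeta values of weight $w-2$. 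But the tool you invoke to finish --- the Kawashima analogue, Theorem~\ref{thm:main-Kawashima} --- is the wrong one, and this is a genuine gap. That theorem supplies \emph{vanishing} relations $\zeta^{RS}\bigl(({}_{\downarrow}(\bm{k}*\bm{l})_{\downarrow})^{\vee}\bigr)=0$, i.e.\ linear relations \emph{satisfied by} the symmetric values (and it is itself derived from Theorem~\ref{thm:main} later in the paper); it gives no information about whether the SMZVs of weight $w-2$ span $\mathsf{MZV}_{w-2}$, which is the surjectivity you need. That spanning statement is a nontrivial external input, namely Yasuda's theorem (Theorem~\ref{thm:smzv-span-mzv}), and the paper's proof consists precisely of combining Theorem~\ref{thm:main} with Yasuda's result to get $\mathsf{MZV}\cdot\Psi_{2}(z)\subset\mathsf{MTGF}+\bigoplus_{s\geq3}\mathsf{MZV}\cdot\Psi_{s}(z)$. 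No amount of relations among SMZVs can substitute for this.

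Your plan for $s>2$ also does not go through as stated: for $s>2$ the coefficient of $\Psi_{s}(z)$ produced by Theorem~\ref{thm:main} is $(s-2)!$ times the coefficient of $T^{s-2}$ in the polynomial $\zeta_{\shuffle}^{S}(\bm{l};T)$, not ``a single symmetric multiple zeta value,'' so surjectivity of $\mathrm{coef}_{s}$ is not transparent there. The paper sidesteps this entirely in one of two equivalent ways you should adopt: either quote Bouillot's own reduction (Theorem~\ref{thm:Bou-equiv}, (c)$\Rightarrow$(a)), which says it suffices to prove $\zeta(\bm{k})\Psi_{2}(z)\in\mathsf{MTGF}$; or use the differentiation bootstrap, namely that $\frac{\partial}{\partial z}$ maps $\mathsf{MTGF}_{k}$ into $\mathsf{MTGF}_{k+1}$ (Theorem~\ref{thm:Bou}(ii)) and sends $\Psi_{n}$ to $-n\Psi_{n+1}$, so applying $(\frac{\partial}{\partial z})^{n-2}$ to the $\Psi_{2}$-statement yields $\mathsf{MZV}\cdot\Psi_{n}(z)\subset\mathsf{MTGF}+\bigoplus_{s\geq n+1}\mathsf{MZV}\cdot\Psi_{s}(z)$, and a finite descending iteration in each weight absorbs all the error terms into $\mathsf{MTGF}$. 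With Yasuda's theorem in place of the Kawashima analogue and one of these two reductions in place of your induction sketch, your argument becomes the paper's.
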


Furthermore, Bouillot gave several equivalent forms of the conjectures.
Put $\mathsf{MZV}=\bigoplus_{w=0}^{\infty}\mathsf{MZV}_{w}$ and $\mathsf{MTGF}=\bigoplus_{w=2}^{\infty}\mathsf{MTGF}_{w}$.
\begin{thm}[{Bouillot, \cite[Property 4 and Property 5]{Bou14AlgMTGF}}]
\label{thm:Bou-equiv}The following are equivalent.
\begin{description}
\item [{(a)}] Conjecture 4.
\item [{(b)}] $\mathsf{MTGF}$ is a graded $\mathsf{MZV}$-module, i.e.,
$\mathsf{MZV}_{p}\cdot\mathsf{MTGF}_{q}\subset\mathsf{MTGF}_{p+q}$
for $p\geq0$, $q\geq2$, or more explicitly,
\[
\zeta(k_{1},\dots,k_{r})\Psi_{l_{1}+\cdots+l_{s}}(z)\in\mathsf{MTGF}_{k_{1}+\cdots+k_{r}+l_{1}+\cdots+l_{s}}
\]
for $(k_{1},\dots,k_{r})\in\mathbb{Z}_{\geq1}^{r}$ and $(l_{1},\dots,l_{s})\in\mathbb{Z}_{\geq1}^{s}$
with $k_{r},l_{1},l_{s}\geq2$.
\item [{(c)}] For $(k_{1},\dots,k_{r})\in\mathbb{Z}_{\geq1}^{r}$ with
$k_{r}\geq2$, we have
\[
\zeta(k_{1},\dots,k_{r})\Psi_{2}(z)\in\mathsf{MTGF}_{k_{1}+\cdots+k_{r}+2}.
\]
\end{description}
\end{thm}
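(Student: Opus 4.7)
The plan is to establish the cycle (a) $\Rightarrow$ (b) $\Rightarrow$ (c) $\Rightarrow$ (a). Since Theorem \ref{thm:Bou}(v) already supplies the inclusion $\mathsf{MTGF}_w \subset \bigoplus_{s=2}^w \mathsf{MZV}_{w-s} \cdot \Psi_s(z)$, condition (a) is equivalent to the reverse inclusion $\mathsf{MZV}_{w-s} \cdot \Psi_s(z) \subset \mathsf{MTGF}_w$ holding for every $w \geq s \geq 2$. Producing this reverse inclusion from (c) will be the substantive task.

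For the two easy directions: to see (a) $\Rightarrow$ (b), I would decompose an arbitrary $f \in \mathsf{MTGF}_q$ via (a) as $f = \sum_s \beta_s \Psi_s(z)$ with $\beta_s \in \mathsf{MZV}_{q-s}$, then for $\zeta \in \mathsf{MZV}_p$ write $\zeta f = \sum_s (\zeta \beta_s)\,\Psi_s(z)$, which lies in $\mathsf{MTGF}_{p+q}$ by a second application of (a). And (b) $\Rightarrow$ (c) is immediate on taking $q = 2$ and recalling $\Psi_2(z) \in \mathsf{MTGF}_2$.

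For the main step (c) $\Rightarrow$ (a), the crucial observation is that $\mathsf{MTGF}$ is stable under $d/dz$ with a weight shift, i.e.\ $\frac{d}{dz}\mathsf{MTGF}_w \subset \mathsf{MTGF}_{w+1}$. This follows at once from Theorem \ref{thm:Bou}(ii), because incrementing any entry of an admissible index $(k_1,\ldots,k_r)$ with $k_1,k_r \geq 2$ produces another admissible index (still with first and last entries $\geq 2$) of weight $w+1$. Granting this, I would argue by induction on $s \geq 2$ that $\zeta \cdot \Psi_s(z) \in \mathsf{MTGF}_{p+s}$ for every $\zeta \in \mathsf{MZV}_p$: the base case $s = 2$ is exactly (c), while the inductive step treats $\zeta$ as a constant in $z$ and differentiates the relation $\zeta \cdot \Psi_s(z) \in \mathsf{MTGF}_{p+s}$ to obtain $-s\, \zeta \cdot \Psi_{s+1}(z) \in \mathsf{MTGF}_{p+s+1}$. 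This delivers the reverse inclusion, hence (a).

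The only real difficulty is spotting the differentiation trick that bootstraps (c) to all weights $s \geq 2$; once that is in hand, the weight bookkeeping is automatic and no identities beyond those already recorded in Theorem \ref{thm:Bou} are required.
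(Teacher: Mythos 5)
Your proposal is correct. Note that the paper itself does not prove this equivalence --- it is quoted from Bouillot's work --- but your cycle (a) $\Rightarrow$ (b) $\Rightarrow$ (c) $\Rightarrow$ (a) is sound: the two easy implications need only the graded ring structure of $\mathsf{MZV}$ and the fact $\Psi_{2}(z)\in\mathsf{MTGF}_{2}$, and your key step, bootstrapping (c) from $\Psi_{2}$ to all $\Psi_{n}$ by differentiating and using Theorem \ref{thm:Bou} (ii), is exactly the device the paper itself deploys when applying $(\partial/\partial z)^{n-2}$ in its proof of Theorem \ref{thm:main-conjBou}.
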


\subsection{Symmetric multiple zeta values}

Symmetric multiple zeta values are variants of multiple zeta values
introduced by Kaneko and Zagier \cite{KZ} as the real counterpart
of finite multiple zeta values
\[
\zeta^{\mathcal{A}}(l_{1},\dots,l_{r}):=\left(\sum_{0<m_{1}<\cdots<m_{r}}\frac{1}{m_{1}^{l_{1}}\cdots m_{r}^{l_{r}}}\bmod p\right)_{p}\in\left(\prod_{p:\mathrm{prime}}\mathbb{Z}/p\mathbb{Z}\right)/\left(\bigoplus_{p:\mathrm{prime}}\mathbb{Z}/p\mathbb{Z}\right).
\]
Symmetric multiple zeta values are defined as follows. For $\bm{l}=(l_{1},\dots,l_{r})\in\mathbb{Z}_{\geq1}^{r}$,
we denote by $\zeta_{\shuffle}(\bm{l};T)\in\mathbb{R}[T]$ (resp.
$\zeta_{*}(\bm{l};T)\in\mathbb{R}[T]$) the shuffle (resp. harmonic)
regularized multiple zeta polynomial (see \cite{IKZ}\footnote{In \cite{IKZ}, the notation $Z_{\bm{l}}^{\bullet}(T)$ is used for
$\zeta_{\bullet}(\bm{l};T)$ where $\bullet\in\{\shuffle,*\}$.}). They are characterized by the properties $\zeta_{\bullet}(\bm{l};T)=\zeta(\bm{l})$
for $l_{r}\geq2$, $\zeta_{\bullet}(1;T)=T$, and the product formula
$\zeta_{\bullet}(\bm{k}\bullet\bm{l};T)=\zeta_{\bullet}(\bm{k};T)\zeta_{\bullet}(\bm{l};T)$
where $\bullet\in\{\shuffle,*\}$. For $\bm{l}=(l_{1},\dots,l_{r})\in\mathbb{Z}_{\geq1}^{r}$,
we define the shuffle ($\bullet=\shuffle$) or harmonic ($\bullet=*$)
regularized symmetric multiple zeta polynomials by
\[
\zeta_{\bullet}^{S}(l_{1},\dots,l_{r};T)\coloneqq\sum_{i=0}^{r}(-1)^{l_{i+1}+\cdots+l_{r}}\zeta_{\bullet}(l_{1},\dots,l_{i};0)\zeta_{\bullet}(l_{r},\dots,l_{i+1};T)\in\mathbb{R}[T].
\]
By definition,
\[
\zeta_{\bullet}^{S}(\bm{l};0)\in\mathsf{MZV}_{\mathrm{wt}(\bm{l})}
\]
where $\mathrm{wt}(\bm{l})=l_{1}+\cdots+l_{r}$. It is known that
\[
\zeta_{\shuffle}^{S}(\bm{l};0)\equiv\zeta_{*}^{S}(\bm{l};0)\pmod{\zeta(2)\mathsf{MZV}_{\mathrm{wt}(\bm{l})-2}}.
\]
Then, for $\bm{l}=(l_{1},\dots,l_{r})\in\mathbb{Z}_{\geq1}^{r}$,
the symmetric multiple zeta value $\zeta^{S}(\bm{l})\in\mathsf{MZV}_{\mathrm{wt}(\bm{l})}/\zeta(2)\mathsf{MZV}_{\mathrm{wt}(\bm{l})-2}$
is defined by
\[
\zeta^{S}(\bm{l}):=(\zeta_{\bullet}^{S}(\bm{l};0)\bmod\{\zeta(2)\mathsf{MZV}_{\mathrm{wt}(\bm{l})-2}\})
\]
which does not depend on the choice of $\bullet\in\{\shuffle,*\}$.
It is conjectured that the set of $\mathbb{Q}$-linear relations among
$\zeta^{S}(\cdots)$ coincides with the one of $\zeta^{\mathcal{A}}(\cdots)$.
Yasuda showed that the symmetric multiple zeta values generate the
whole space of multiple zeta values.
\begin{thm}[Yasuda, \cite{Yas}]
\label{thm:smzv-span-mzv}For $l\ge0$, we have
\[
\mathsf{MZV}_{l}/\zeta(2)\mathsf{MZV}_{l-2}=\mathrm{span}_{\mathbb{Q}}\{\zeta^{S}(\bm{l})\mid\mathrm{wt}(\bm{l})=l\}.
\]
\end{thm}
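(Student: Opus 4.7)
The plan is to prove the statement by induction on the weight $l$, with base case $l=0$ handled by the observation that $\zeta^{S}(\emptyset)=1$ spans $\mathsf{MZV}_{0}=\mathbb{Q}$.

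For the inductive step, I would unfold the shuffle version of the defining formula and separate the two extremal terms:
\[
\zeta_{\shuffle}^{S}(\bm{l};0)=\zeta_{\shuffle}(\bm{l};0)+(-1)^{\mathrm{wt}(\bm{l})}\zeta_{\shuffle}(\overline{\bm{l}};0)+\sum_{i=1}^{r-1}(-1)^{l_{i+1}+\cdots+l_{r}}\zeta_{\shuffle}(l_{1},\dots,l_{i};0)\,\zeta_{\shuffle}(l_{r},\dots,l_{i+1};0),
\]
where $\bm{l}=(l_{1},\dots,l_{r})$ and $\overline{\bm{l}}=(l_{r},\dots,l_{1})$. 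Each factor appearing in the middle sum has weight strictly less than $l$, so after writing every shuffle-regularized value as a $\mathbb{Q}$-linear combination of convergent MZVs and applying the induction hypothesis factor by factor, the middle sum lies inside $\mathrm{span}_{\mathbb{Q}}\{\zeta^{S}(\bm{l})\mid\mathrm{wt}(\bm{l})=l\}+\zeta(2)\mathsf{MZV}_{l-2}$. Consequently, modulo this subspace, the span of the $\zeta^{S}(\bm{l})$ already contains the reversal-symmetrization $\zeta(\bm{l})+(-1)^{l}\zeta(\overline{\bm{l}})$ for every admissible $\bm{l}$ of weight $l$.

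What remains is to upgrade these reversal-symmetrizations into a full spanning set for $\mathsf{MZV}_{l}/\zeta(2)\mathsf{MZV}_{l-2}$ on their own. My approach would be to exploit the independence of $\zeta^{S}(\bm{l})$ from the choice $\bullet\in\{\shuffle,*\}$, i.e.\ the congruence $\zeta_{\shuffle}^{S}(\bm{l};0)\equiv\zeta_{*}^{S}(\bm{l};0)\pmod{\zeta(2)\mathsf{MZV}_{l-2}}$ recalled in the excerpt. Subtracting the two regularizations and applying the explicit shuffle-to-harmonic transition yields additional ``antisymmetric'' information about $\zeta(\bm{l})-(-1)^{l}\zeta(\overline{\bm{l}})$ modulo the $\zeta(2)$-ideal. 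Combined with the symmetric data coming from the previous paragraph, one recovers each of $\zeta(\bm{l})$ and $\zeta(\overline{\bm{l}})$ individually modulo $\zeta(2)\mathsf{MZV}_{l-2}$.

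The principal obstacle is certifying that the linear system so obtained is in fact surjective onto $\mathsf{MZV}_{l}/\zeta(2)\mathsf{MZV}_{l-2}$, rather than missing some subspace preserved by duality, cyclic, or Hoffman-type relations among MZVs. Overcoming this requires an explicit depth-reduction input---Ohno's relation or a closely related regularized identity---to express every convergent $\zeta(\bm{k})$ of weight $l$ as a $\mathbb{Q}$-linear combination of reversal-symmetrizations of the same weight plus strictly lower-depth terms, which can then be absorbed by a secondary induction on depth. Running this double induction carefully, while keeping every error term inside $\zeta(2)\mathsf{MZV}_{l-2}$, is the delicate technical core of the argument and the step where I would expect the bulk of the verification effort to lie.
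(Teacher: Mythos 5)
This theorem is quoted from Yasuda's paper \cite{Yas} and is not proved in the present article, so there is no internal proof to compare against; judging your argument on its own terms, it has a genuine gap. Your first reduction is sound in outline: separating the extremal terms $i=0$ and $i=r$ shows that, modulo products of lower-weight values, $\zeta^{S}(\bm{l})$ is the reversal-symmetrization $\zeta_{\shuffle}(\bm{l};0)+(-1)^{l}\zeta_{\shuffle}(\overline{\bm{l}};0)$, and the middle terms can be absorbed by induction \emph{provided} you also invoke the harmonic product formula $\zeta^{S}(\bm{k})\zeta^{S}(\bm{l})=\zeta^{S}(\bm{k}*\bm{l})$ modulo $\zeta(2)$ to convert products of lower-weight symmetric values back into weight-$l$ symmetric values; you do not state this, but it is standard and fixable.

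The unfixed problem is the surjectivity step, which is exactly the content of Yasuda's theorem and which you explicitly leave open. Two concrete objections. First, the device you propose for extracting the ``antisymmetric'' combinations $\zeta(\bm{l})-(-1)^{l}\zeta(\overline{\bm{l}})$ is vacuous: since $\zeta_{\shuffle}^{S}(\bm{l};0)\equiv\zeta_{*}^{S}(\bm{l};0)\pmod{\zeta(2)\mathsf{MZV}_{l-2}}$, the difference of the two regularizations already lies in $\zeta(2)\mathsf{MZV}_{l-2}$ and contributes nothing in the quotient, so no new spanning vectors can come from it. Second, without that input you are left having to show that the reversal-symmetrizations of weight $l$, together with products, span $\mathsf{MZV}_{l}/\zeta(2)\mathsf{MZV}_{l-2}$; a priori they could span a proper subspace (note, for instance, that the symmetrization of the depth-one value $\zeta(l)$ vanishes for odd $l$, so these generators must be recovered from higher depth). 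You gesture at Ohno's relation plus a double induction on weight and depth, but this is precisely the hard step: Yasuda's actual argument is a nontrivial computation in the shuffle Hopf algebra (an antipode-type identity showing that a certain linear endomorphism of the word algebra is invertible modulo the relevant ideal), not something that follows from citing Ohno's relation. As it stands, the proposal reduces the theorem to its hardest part and stops there.
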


In \cite{Hir20Ref}, the author defined refined symmetric multiple
zeta values $\zeta^{RS}(l_{1},\dots,l_{r})$, which are complex numbers
defined by the iterated integral along a non-trivial simple closed
path from $0$ to $0$ on $\mathbb{P}^{1}\setminus\{0,1,\infty\}$.
They can be expressed by both of shuffle and harmonic regularized
symmetric multiple zeta polynomials as
\begin{equation}
\zeta^{RS}(\bm{l})=\frac{1}{2\pi i}\int_{0}^{2\pi i}\zeta_{\shuffle}^{S}(\bm{l};T)dT=\zeta_{*}^{S}(\bm{l};\pi i)\label{eq:zeta_RS}
\end{equation}
(see \cite[Corollary 12]{Hir20Ref}).

\subsection{Main results}

For $\bm{k}=(k_{1},\dots,k_{r})\in\mathbb{Z}_{\geq1}^{r}$ with $r>0$,
Hoffman's dual index $\bm{k}^{\vee}$ is defined by
\[
\bm{k}^{\vee}=(\underbrace{1,\dots,1}_{k_{1}}+\underbrace{1,\dots,1}_{k_{2}}+1,\dots,1+\underbrace{1,\dots,1}_{k_{r}}).
\]
For example,
\[
(2,1,1,3,4)^{\vee}=(\underbrace{1,1}_{2}+\underbrace{1}_{1}+\underbrace{1}_{1}+\underbrace{1,1,1}_{3}+\underbrace{1,1,1,1}_{4})=(1,4,1,2,1,1,1).
\]
For $\bm{k}=(k_{1},\dots,k_{r})\in\mathbb{Z}_{\ge1}^{r}$ with $k_{1}\geq1$
(resp. $k_{r}\geq1$), the symbol $_{\downarrow}\bm{k}$ (resp. $\bm{k}_{\downarrow}$)
denotes the index obtained by decrementing the first (resp. last)
component by $1$. Furthermore, we simply write $_{\downarrow}\bm{k}_{\downarrow}$
for $_{\downarrow}(\bm{k}_{\downarrow})=({}_{\downarrow}\bm{k})_{\downarrow}$,
i.e., 
\[
_{\downarrow}(k_{1},\dots,k_{r})_{\downarrow}=(k_{1}-1,k_{2},\dots,k_{r-1},k_{r}-1).
\]
The first main result of this paper is the following formula that
connects the multitangent functions and the symmetric multiple zeta
values. 
\begin{thm}
\label{thm:main}For $\bm{k}=(k_{1},\dots,k_{d})\in\mathbb{Z}_{\geq1}^{d}$
with $k_{1},k_{d}\geq2$, we have
\[
\Psi_{\bm{k}}(z)=(-1)^{k_{1}+\cdots+k_{d}}\rho_{z}(\zeta_{\shuffle}^{S}(({}_{\downarrow}\bm{k}_{\downarrow})^{\vee};T))
\]
where $\rho_{z}:\mathbb{R}[T]\to\bigoplus_{m=2}^{\infty}\mathbb{R}\Psi_{m}(z)$
is the $\mathbb{R}$-linear map defined by
\[
\rho_{z}(T^{s})=s!\Psi_{s+2}(z).
\]
\end{thm}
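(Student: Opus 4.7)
The plan is to expand both sides of the claimed identity as $\mathbb{R}$-linear combinations in the family $\{\Psi_s(z)\}_{s\geq 2}$ and match coefficients. The left-hand side is handled immediately by Bouillot's formula (Theorem \ref{thm:Bou} (v)): it produces
\[
\Psi_{\bm{k}}(z) = \sum_{s \geq 2} A_{\bm{k},s}\, \Psi_s(z), \qquad A_{\bm{k},s}\in\mathsf{MZV}_{\mathrm{wt}(\bm{k})-s},
\]
in which each $A_{\bm{k},s}$ is an explicit sum indexed by a position $j\in\{1,\dots,d\}$ and a splitting $a+s+b=k_j$, weighted by the binomial-MZV symbols $\zeta_a$ and $\zeta_b$. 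My task is then to produce a matching expansion of the right-hand side.

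For the right-hand side, I will first write the shuffle-regularized polynomial $\zeta_{\shuffle}(\bm{m};T)$ as an explicit polynomial in $T$ with $\mathsf{MZV}$-coefficients, obtained inductively from the shuffle product and $\zeta_{\shuffle}(1;T)=T$. Substituting this into the defining formula of $\zeta_{\shuffle}^S(({}_{\downarrow}\bm{k}_{\downarrow})^{\vee};T)$ and applying $\rho_z$, which sends $T^j\mapsto j!\,\Psi_{j+2}(z)$, converts the right-hand side into a finite sum over splittings of the Hoffman-dual index, whose coefficient of $\Psi_s(z)$ is an explicit signed product of an honest MZV and a polynomial coefficient of $\zeta_{\shuffle}(\cdots;T)$. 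Matching the coefficients of each $\Psi_s(z)$ reduces Theorem \ref{thm:main} to an algebraic identity in $\mathsf{MZV}$ between $A_{\bm{k},s}$ and the corresponding coefficient produced from $({}_{\downarrow}\bm{k}_{\downarrow})^{\vee}$, up to the overall sign $(-1)^{\mathrm{wt}(\bm{k})}$.

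The main obstacle is establishing this resulting MZV identity. It pairs a Bouillot-style decomposition of $\bm{k}$ at one internal position with a symmetric-MZV-style decomposition of the Hoffman-dual index at one internal position, and the two indexing sets live on opposite sides of Hoffman duality, so their equivalence is not at all obvious from the definitions. My approach will be to transport everything into the shuffle Hopf algebra $\mathbb{Q}\langle e_0,e_1\rangle$ in which MZVs are represented by words: Hoffman duality corresponds there to the involution exchanging the letters $e_0$ and $e_1$, and the decrement ${}_{\downarrow}(\cdot)_{\downarrow}$ corresponds to deleting one $e_0$ at each end. Under this translation the two sides become two different ways of cutting the word attached to $\bm{k}$ at an internal letter, and the binomial factors hidden in $\zeta_a,\zeta_b$ arise naturally from shuffle-regularizing the leading or trailing $e_0$-blocks. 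The most delicate step, I expect, will be the coherent handling of the shuffle regularization of the non-admissible subwords that appear on both sides, so that the $T$-dependent terms cancel precisely as dictated by the definition of $\zeta_{\shuffle}^S$. Should a direct word-level comparison prove unwieldy, a fallback is to reinterpret both sides as residue decompositions of a single iterated integral along a loop winding around $z\in\mathbb{C}\setminus\mathbb{Z}$: Bouillot's formula emerges by collecting residues at the poles $-m_j$, while the symmetric-MZV expansion emerges by cutting the loop at a base point and splitting via the standard path-composition formula for iterated integrals.
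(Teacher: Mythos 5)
Your plan is essentially the paper's own proof: both start from Bouillot's decomposition (Theorem \ref{thm:Bou} (v)), encode the shifted zeta factors as words in a two-letter alphabet so that the sum over $(j,a,b)$ becomes a sum over cuts of the word $x^{k_1-1}y\cdots yx^{k_d-1}$ at an internal $x$-block, and then use the letter-swap automorphism (which realizes Hoffman duality composed with the boundary decrement) to turn these cuts into the decompositions defining $\zeta_{\shuffle}^{S}$, with the $T$-powers absorbed by the standard description of shuffle regularization via stripping boundary letters. The delicate regularization step you flag is handled in the paper exactly as you anticipate, so the proposal is sound and matches the published argument.
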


Furthermore, we get the following results as applications of Theorem
\ref{thm:main}.
\begin{thm}
\label{thm:main-conjBou}Conjecture \ref{conj:Bou} is true.
\end{thm}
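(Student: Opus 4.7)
The plan is to prove statement (c) of Theorem \ref{thm:Bou-equiv}---which is equivalent to Conjecture \ref{conj:Bou}---by strong induction on the weight $w$. The base case $w=2$ is trivial ($\Psi_2(z) \in \mathsf{MTGF}_2$, corresponding to the empty index $\bm{k}$). For the inductive step at weight $w \geq 3$, assuming (c) at all smaller weights, I would show $\mathsf{MZV}_{w-2}\Psi_2(z) \subset \mathsf{MTGF}_w$.

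Two bootstrap consequences of the induction hypothesis are used throughout. \emph{(Bootstrap I)} For $j \geq 1$, applying $\partial_z^j$ (Theorem \ref{thm:Bou} (ii)) to $\mathsf{MZV}_{w-2-j}\Psi_2(z) \subset \mathsf{MTGF}_{w-j}$ and using $\partial_z^j \Psi_2 = (-1)^j(j+1)!\,\Psi_{j+2}$ yields $\mathsf{MZV}_{w-2-j}\Psi_{j+2}(z) \subset \mathsf{MTGF}_w$. \emph{(Bootstrap II)} By Theorem \ref{thm:Bou} (iv), $\mathsf{MTGF}$ is a graded algebra, so multiplication by $\Psi_2 \in \mathsf{MTGF}_2$ sends $\mathsf{MTGF}_{w-2}$ into $\mathsf{MTGF}_w$; combined with the hypothesis at weight $w-2$ this gives $\mathsf{MZV}_{w-4}\,\Psi_2(z)^2 \subset \mathsf{MTGF}_w$.

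The main input is now Theorem \ref{thm:main}. For each $\bm{l}$ with $\mathrm{wt}(\bm{l}) = w-2$, take the unique admissible $\bm{k}$ with $({}_{\downarrow}\bm{k}_{\downarrow})^{\vee} = \bm{l}$ and expand $\zeta^S_\shuffle(\bm{l};T) = \sum_j c_j(\bm{l})\,T^j$ with $c_j(\bm{l}) \in \mathsf{MZV}_{w-2-j}$. Theorem \ref{thm:main} then reads
\[ \Psi_{\bm{k}}(z) = (-1)^w \sum_{j\geq 0} j!\,c_j(\bm{l})\,\Psi_{j+2}(z). \]
Bootstrap I absorbs every $j \geq 1$ summand into $\mathsf{MTGF}_w$, forcing $\zeta^S_\shuffle(\bm{l};0)\,\Psi_2(z) \in \mathsf{MTGF}_w$ for every such $\bm{l}$. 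By Yasuda's Theorem \ref{thm:smzv-span-mzv}, the representatives $\zeta^S_\shuffle(\bm{l};0)$ span $\mathsf{MZV}_{w-2}$ modulo $\zeta(2)\mathsf{MZV}_{w-4}$.

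The main obstacle is precisely this $\zeta(2)\mathsf{MZV}_{w-4}$ residue, which is invisible to Yasuda's theorem. To close the gap I would use the identity
\[ \zeta(2)\,\Psi_2(z) = \tfrac{1}{4}\bigl(\Psi_2(z)^2 - \Psi_4(z)\bigr), \]
which follows from $\Psi_2^2 = 2\Psi_{2,2} + \Psi_4$ (harmonic product, Theorem \ref{thm:Bou} (iv)) and the explicit evaluation $\Psi_{2,2}(z) = 2\zeta(2)\Psi_2(z)$ via Theorem \ref{thm:Bou} (v). Bootstrap II provides $\mathsf{MZV}_{w-4}\Psi_2(z)^2 \subset \mathsf{MTGF}_w$ and Bootstrap I at $j=2$ provides $\mathsf{MZV}_{w-4}\Psi_4(z) \subset \mathsf{MTGF}_w$, so $\zeta(2)\mathsf{MZV}_{w-4}\Psi_2(z) \subset \mathsf{MTGF}_w$. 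Together with the previous paragraph this completes the inductive step, and Theorem \ref{thm:Bou-equiv} upgrades (c) to Conjecture \ref{conj:Bou}.
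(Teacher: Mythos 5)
Your proof is correct, and its engine is the same as the paper's: Theorem \ref{thm:main} turns $\Psi_{\bm{k}}(z)$ into $\rho_{z}(\zeta_{\shuffle}^{S}(\bm{l};T))$ so as to isolate $\zeta_{\shuffle}^{S}(\bm{l};0)\Psi_{2}(z)$ modulo higher monotangents, Yasuda's Theorem \ref{thm:smzv-span-mzv} converts spanning by symmetric multiple zeta values into spanning of $\mathsf{MZV}$, and Theorem \ref{thm:Bou} (ii) transports $\Psi_{2}$-inclusions to $\Psi_{j+2}$-inclusions; the conclusion then passes through Theorem \ref{thm:Bou-equiv} (c) exactly as in the paper. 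The organization differs: the paper proves the single inclusion $\mathsf{MZV}\cdot\Psi_{2}(z)\subset\mathsf{MTGF}+\bigoplus_{s\geq3}\mathsf{MZV}\cdot\Psi_{s}(z)$ and then iterates the derivative to push the error to ever larger $s$ until it is empty in each weight, whereas you run a strong induction on the weight and absorb the higher monotangent terms at once via the induction hypothesis (your Bootstrap I). The one substantive divergence is your treatment of the $\zeta(2)\mathsf{MZV}_{w-4}$ residue left over by Yasuda's theorem: you patch it with the identity $\zeta(2)\Psi_{2}(z)=\tfrac{1}{4}\bigl(\Psi_{2}(z)^{2}-\Psi_{4}(z)\bigr)$ (which checks out: $\Psi_{2}^{2}=2\Psi_{2,2}+\Psi_{4}$ and $\Psi_{2,2}=2\zeta(2)\Psi_{2}$) together with the multiplicativity of $\mathsf{MTGF}$ from Theorem \ref{thm:Bou} (iv), while the paper instead asserts $\mathsf{MZV}=\mathsf{SMZV}$ outright from Theorem \ref{thm:smzv-span-mzv} --- an assertion that, from the quoted statement alone, itself requires a small weight induction using $\zeta(2)\in\mathsf{SMZV}_{2}$. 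Your version makes that step explicit and self-contained inside the multitangent framework, at the cost of slightly heavier bookkeeping (two bootstrap lemmas and the graded-ring observation that the harmonic product of admissible indices is a sum of admissible indices, which is needed for Bootstrap II and is true).
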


\begin{thm}[Analogue of Kawashima relation for symmetric multiple zeta values]
\label{thm:main-Kawashima}For an index $\bm{k}=(k_{1},\dots,k_{r})\in\mathbb{Z}_{\geq1}^{r}$
and $\bm{l}=(l_{1},\dots,l_{s})\in\mathbb{Z}_{\geq1}^{s}$ with $k_{1},k_{r},l_{1},l_{s}\geq2$,
we have
\[
\zeta^{RS}\left(\left(_{\downarrow}\left(\bm{k}*\bm{l}\right)_{\downarrow}\right)^{\vee}\right)=0.
\]
\end{thm}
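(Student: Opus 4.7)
The plan is to use Theorem \ref{thm:main} to translate the stated vanishing into a statement about the first Fourier coefficient of $\Psi_{\bm{k}}(z)\Psi_{\bm{l}}(z)$, which will turn out to be trivially zero. Since $k_1,k_r,l_1,l_s\geq 2$, every index appearing in the formal sum $\bm{k}*\bm{l}$ has first and last components $\geq 2$, so Theorem \ref{thm:main} applies termwise to give
\[
\Psi_{\bm{k}*\bm{l}}(z)=(-1)^{\mathrm{wt}(\bm{k})+\mathrm{wt}(\bm{l})}\rho_{z}\bigl(P(T)\bigr),\qquad P(T):=\zeta_{\shuffle}^{S}\bigl((_{\downarrow}(\bm{k}*\bm{l})_{\downarrow})^{\vee};T\bigr).
\]
On the other hand, Theorem \ref{thm:Bou}(iv) yields $\Psi_{\bm{k}*\bm{l}}(z)=\Psi_{\bm{k}}(z)\Psi_{\bm{l}}(z)$, so the product identity $\Psi_{\bm{k}}(z)\Psi_{\bm{l}}(z)=\pm\rho_{z}(P(T))$ holds on $\mathbb{C}\setminus\mathbb{Z}$, and I will compare just one Fourier coefficient in the half-plane $\mathrm{Im}(z)<0$.

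In that half-plane, writing $\bar{q}=e^{-2\pi iz}$, the classical expansion $\Psi_{s}(z)=\frac{(2\pi i)^{s}}{(s-1)!}\sum_{n\geq 1}n^{s-1}\bar{q}^{n}$ for $s\geq 2$ (derived by differentiating the $\bar{q}$-expansion of $\Psi_{1}(z)=\pi\cot(\pi z)$) together with $\rho_{z}(T^{s})=s!\,\Psi_{s+2}(z)$ and $\int_{0}^{2\pi in}T^{s}\,dT=(2\pi in)^{s+1}/(s+1)$ gives, for any polynomial $P$,
\[
\rho_{z}(P(T))=2\pi i\sum_{n\geq 1}\bar{q}^{n}\int_{0}^{2\pi in}P(T)\,dT.
\]
In particular, by \eqref{eq:zeta_RS},
\[
[\bar{q}^{1}]\,\rho_{z}(P)=2\pi i\int_{0}^{2\pi i}P(T)\,dT=(2\pi i)^{2}\,\zeta^{RS}\bigl((_{\downarrow}(\bm{k}*\bm{l})_{\downarrow})^{\vee}\bigr).
\]
For the other side, Theorem \ref{thm:Bou}(v) expresses each of $\Psi_{\bm{k}}(z)$ and $\Psi_{\bm{l}}(z)$ as a $\mathsf{MZV}$-linear combination of monotangent functions $\Psi_{s}(z)$ with $s\geq 2$, each of which begins at $\bar{q}^{1}$; thus the Cauchy product $\Psi_{\bm{k}}(z)\Psi_{\bm{l}}(z)$ vanishes to order $\bar{q}^{2}$, so its coefficient of $\bar{q}^{1}$ is zero. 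Equating the two $\bar{q}^{1}$ coefficients forces $\zeta^{RS}((_{\downarrow}(\bm{k}*\bm{l})_{\downarrow})^{\vee})=0$.

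The only nontrivial computation is the identity $[\bar{q}^{1}]\rho_{z}(P)=2\pi i\int_{0}^{2\pi i}P(T)\,dT$, which matches the operator $\rho_{z}$ to the very contour defining $\zeta^{RS}$ in \eqref{eq:zeta_RS}; I expect this bookkeeping to be the main obstacle, while the vanishing of $[\bar{q}^{1}]$ of the product $\Psi_{\bm{k}}\Psi_{\bm{l}}$ is immediate from the Cauchy product.
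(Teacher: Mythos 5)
Your proof is correct and is essentially the paper's own argument: both extract the first coefficient of the expansion at the cusp, identify it on one side as $(2\pi i)^{2}\zeta^{RS}$ via $[\text{first coefficient}]\,\rho_{z}(P)=2\pi i\int_{0}^{2\pi i}P(T)\,dT$ and (\ref{eq:zeta_RS}), and observe it vanishes on the other side because a product of two series with no constant term has no linear term. The only cosmetic difference is that you expand in $\bar{q}=e^{-2\pi iz}$ while the paper uses $w=1/(e^{2\pi iz}-1)=\bar{q}+O(\bar{q}^{2})$, so the two computations coincide.
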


\section{Proof of the main results}

\subsection{Proof of Theorem \ref{thm:main}}

For the proof of the main results, it is convenient to introduce the
setting of Hoffman's algebra. Let $\mathbb{Q}\langle x,y\rangle$
be free non-commutative $\mathbb{Q}$-algebra generated by $x$ and
$y$. Then shuffle product $\shuffle:\mathbb{Q}\langle x,y\rangle\otimes\mathbb{Q}\langle x,y\rangle\to\mathbb{Q}\langle x,y\rangle$
is the $\mathbb{Q}$-linear map defined by the recursion
\begin{align*}
u\shuffle1 & =1\shuffle u=u,\qquad(u\in\mathbb{Q}\langle x,y\rangle)\\
(ua\shuffle vb) & =(u\shuffle vb)a+(ua\shuffle v)b\qquad(u,v\in\mathbb{Q}\langle x,y\rangle,\,a,b\in\{x,y\}).
\end{align*}
We define the $\mathbb{Q}$-linear map $I(0;-;1):\mathbb{Q}\langle x,y\rangle\to\mathbb{R}$
by
\[
I(0;yx^{k_{1}-1}\cdots yx^{k_{d}-1};1)=\zeta(k_{1},\dots,k_{d})\qquad(k_{d}\geq1),
\]
\[
I(0;x\shuffle u;1)=I(0;y\shuffle u;1)=0\qquad(u\in\mathbb{Q}\langle x,y\rangle).
\]
Then, $\zeta_{m}(k_{1},\dots,k_{d})$ can be written as
\begin{equation}
\zeta_{m}(k_{1},\dots,k_{d})=I(0;x^{m}yx^{k_{1}-1}\cdots yx^{k_{d}-1};1).\label{eq:shifted_integ}
\end{equation}
Furthermore define a $\mathbb{Q}$-linear map $I(1;-;0):\mathbb{Q}\langle x,y\rangle\to\mathbb{R}$
by
\begin{equation}
I(0;v_{1}\cdots v_{k};1)=(-1)^{k}I(1;v_{k}\cdots v_{1};0)\label{eq:integ_rev}
\end{equation}
where $v_{1},\dots,v_{k}\in\{x,y\}$. Then we have
\begin{equation}
I(0;v_{1}\cdots v_{k};1)=(-1)^{k}I(1;f_{x\leftrightarrow y}(v_{1}\cdots v_{k});0)\label{eq:integ_xy}
\end{equation}
where $f_{x\leftrightarrow y}$ is the automorphism of $\mathbb{Q}\langle x,y\rangle$
defined by $f_{x\leftrightarrow y}(x)=y$ and $f_{x\leftrightarrow y}(y)=x$.

\begin{proof}[Proof of Theorem \ref{thm:main}]
Recall that, by Theorem \ref{thm:Bou} (v), we have
\begin{equation}
\Psi_{k_{1},\dots,k_{d}}(z)=\sum_{j=1}^{d}\sum_{\substack{a+s+b=k_{j}-2\\
a,b,s\geq0
}
}(-1)^{k_{1}+\cdots+k_{j-1}+a}\zeta_{a}(k_{j-1},\dots,k_{1})\zeta_{b}(k_{j+1},\dots,k_{d})\Psi_{s+2}(z).\label{eq:e1}
\end{equation}
Here, we have
\begin{align*}
(-1)^{k_{1}+\cdots+k_{j-1}+a}\zeta_{a}(k_{j-1},\dots,k_{1}) & =(-1)^{k_{1}+\cdots+k_{j-1}+a}I(0;x^{a}yx^{k_{j-1}-1}\cdots yx^{k_{1}-1};1)\\
 & =I(1;x^{k_{1}-1}y\cdots x^{k_{j-1}-1}yx^{a};0)
\end{align*}
and
\[
\zeta_{b}(k_{j+1},\dots,k_{d})=I(0;x^{b}yx^{k_{j+1}-1}\cdots yx^{k_{d}};1)
\]
by (\ref{eq:shifted_integ}) and (\ref{eq:integ_rev}). Thus, (\ref{eq:e1})
can be rewritten as
\[
\Psi_{k_{1},\dots,k_{d}}(z)=\sum_{s\geq0}\sum_{\substack{w_{1}x^{s+1}w_{2}=\\
x^{k_{1}-1}yx^{k_{2}-1}\cdots yx^{k_{d}-1}
}
}I(1;w_{1};0)I(0;w_{2};1)\Psi_{s+2}(z)
\]
where $w_{1}$ and $w_{2}$ run all monomials in $\mathbb{Q}\langle x,y\rangle$
satisfying $w_{1}x^{s+1}w_{2}=x^{k_{1}-1}yx^{k_{2}-1}\cdots yx^{k_{d}-1}$.
Now, put $u_{j}:=f_{x\leftrightarrow y}(w_{j})$ for $j=1,2$. Then
\begin{align*}
f_{x\leftrightarrow y}(w_{1}x^{s+1}w_{2}) & =u_{1}y^{s+1}u_{2},\\
f_{x\leftrightarrow y}(x^{k_{1}-1}yx^{k_{2}-1}\cdots yx^{k_{d}-1}) & =yx^{l_{1}-1}\cdots yx^{l_{r}-1}y
\end{align*}
where $(l_{1},\dots,l_{r})=(_{\downarrow}\bm{k}_{\downarrow})^{\vee}$.
Thus,
\begin{align*}
\Psi_{k_{1},\dots,k_{d}}(z) & =\sum_{s\geq0}\sum_{\substack{u_{1}y^{s+1}u_{2}=\\
yx^{l_{1}-1}\cdots yx^{l_{r}-1}y
}
}I(1;f_{x\leftrightarrow y}(u_{1});0)I(0;f_{x\leftrightarrow y}(u_{2});1)\Psi_{s+2}(z)\\
 & =\sum_{s\geq0}(-1)^{l_{1}+\cdots+l_{r}-s}\sum_{\substack{u_{1}y^{s+1}u_{2}=\\
yx^{l_{1}-1}\cdots yx^{l_{r}-1}y
}
}I(0;u_{1};1)I(1;u_{2};0)\Psi_{s+2}(z)\qquad(\text{by (\ref{eq:integ_xy})})\\
 & =(-1)^{l_{1}+\cdots+l_{r}}\sum_{\substack{u_{1}yv=\\
yx^{l_{1}-1}\cdots yx^{l_{r}-1}y
}
}I(0;u_{1};1)\sum_{y^{s}u_{2}=v}(-1)^{s}I(1;u_{2};0)\Psi_{s+2}(z)\qquad(v\coloneqq y^{s}u_{2})\\
 & =(-1)^{l_{1}+\cdots+l_{r}}\sum_{j=0}^{r}I(0;yx^{l_{1}-1}\cdots yx^{l_{j}-1};1)\sum_{y^{s}u_{2}=x^{l_{j+1}-1}y\cdots x^{l_{r}-1}y}(-1)^{s}I(1;u_{2};0)\rho_{z}\left(\frac{T^{s}}{s!}\right).
\end{align*}
Here,
\[
I(0;yx^{l_{1}-1}\cdots yx^{l_{j}-1};1)=\zeta_{\shuffle}(l_{1},\dots,l_{j-1};0)
\]
and
\begin{align*}
\sum_{y^{s}u_{2}=x^{l_{j+1}-1}y\cdots x^{l_{r}-1}y}(-1)^{s}I(1;u_{2};0)\frac{T^{s}}{s!} & =(-1)^{l_{j+1}+\cdots+l_{r}}\sum_{wy^{s}=yx^{l_{r}-1}\cdots yx^{l_{j+1}-1}}I(0;w;1)\frac{T^{s}}{s!}\qquad(\text{by (\ref{eq:integ_rev})})\\
 & =(-1)^{l_{j+1}+\cdots+l_{r}}\zeta_{\shuffle}(l_{r},\dots,l_{j+1};T).
\end{align*}
Thus,
\begin{align*}
\Psi_{k_{1},\dots,k_{d}}(z) & =(-1)^{l_{1}+\cdots+l_{r}}\sum_{j=0}^{r}(-1)^{l_{j+1}+\cdots+l_{r}}\zeta_{\shuffle}(l_{1},\dots,l_{j-1};0)\rho_{z}\left(\zeta_{\shuffle}(l_{r},\dots,l_{j+1};T)\right)\\
 & =(-1)^{l_{1}+\cdots+l_{r}}\rho_{z}\left(\zeta_{\shuffle}^{S}(l_{1},\dots,l_{r};T)\right),
\end{align*}
which completes the proof since $(l_{1},\dots,l_{r})=(_{\downarrow}\bm{k}_{\downarrow})^{\vee}$
and $l_{1}+\cdots+l_{r}=k_{1}+\cdots+k_{d}-2$.
\end{proof}

\subsection{Proof of Theorem \ref{thm:main-conjBou}}
\begin{proof}[Proof of Theorem \ref{thm:main-conjBou} ]
Let $\bm{l}=(l_{1},\dots,l_{r})\in\mathbb{Z}_{\geq1}^{r}$ be any
non-empty index. Then, by Theorem \ref{thm:main}, we have
\[
(-1)^{l_{1}+\cdots+l_{r}}\rho_{z}(\zeta_{\shuffle}^{S}(\bm{l};T))=\Psi_{\bm{k}}(z)
\]
where $_{\downarrow}\bm{k}_{\downarrow}=\bm{l}^{\vee}$. Thus,
\[
(-1)^{l_{1}+\cdots+l_{r}}\zeta_{\shuffle}^{S}(\bm{l};0)\Psi_{2}(z)\equiv\Psi_{\bm{k}}(z)\pmod{\bigoplus_{s=1}^{l_{1}+\cdots+l_{r}}\mathsf{MZV}_{l_{1}+\cdots+l_{r}-s}\cdot\Psi_{s+2}(z)}.
\]
Therefore, if we put
\[
\mathsf{SMZV}_{w}=\mathrm{span}_{\mathbb{Q}}\langle\zeta_{\shuffle}^{S}(\bm{l};0)\mid\mathrm{wt}(\bm{l})=w\rangle,\quad\mathsf{SMZV}=\bigoplus_{w=0}^{\infty}\mathsf{SMZV}_{w},
\]
then
\[
\mathsf{SMZV}_{w}\cdot\Psi_{2}(z)\subset\mathsf{MTGF}_{w+2}+\bigoplus_{s=1}^{w}\mathsf{MZV}_{w-s}\cdot\Psi_{s+2}(z).
\]
In other words, we have
\begin{equation}
\mathsf{SMZV}\cdot\Psi_{2}(z)\subset\mathsf{MTGF}+\bigoplus_{s\geq3}\mathsf{MZV}\cdot\Psi_{s}(z)\label{eq:e_smzv}
\end{equation}
as the graded $\mathbb{Q}$-module where we let the degree of $\Psi_{k}$
be $k$. Furthermore, by Theorem \ref{thm:smzv-span-mzv}, we have
\[
\mathsf{MZV}=\mathsf{SMZV},
\]
and thus (\ref{eq:e_smzv}) is equivalent to
\begin{equation}
\mathsf{MZV}\cdot\Psi_{2}(z)\subset\mathsf{MTGF}+\bigoplus_{s\geq3}\mathsf{MZV}\cdot\Psi_{s}(z).\label{eq:e_mzv}
\end{equation}
By applying $(\frac{\partial}{\partial z})^{n-2}$ to (\ref{eq:e_mzv}),
since $\frac{\partial}{\partial z}(\mathsf{MTGF}_{k})\subset\mathsf{MTGF}_{k+1}$
by Theorem \ref{thm:Bou} (ii), we have
\[
\mathsf{MZV}\cdot\Psi_{n}(z)\subset\mathsf{MTGF}+\bigoplus_{s\geq n+1}\mathsf{MZV}\cdot\Psi_{s}(z)
\]
for $n\geq2$. Thus we have
\[
\mathsf{MZV}\cdot\Psi_{2}(z)\subset\mathsf{MTGF}+\bigoplus_{s\geq3}\mathsf{MZV}\cdot\Psi_{s}(z)\subset\mathsf{MTGF}+\bigoplus_{s\geq4}\mathsf{MZV}\cdot\Psi_{s}(z)\subset\cdots\subset\mathsf{MTGF},
\]
which implies Conjecture \ref{conj:Bou} by Theorem \ref{thm:Bou-equiv}
(c).
\end{proof}

\subsection{Proof of Theorem \ref{thm:main-Kawashima}}
\begin{lem}
\label{lem:psi_expand}For $s\geq2$, $\Psi_{s}(z)$ is a degree $s$
polynomial of $w=1/(e^{2\pi iz}-1)$ whose constant term vanishes
and the coefficient of $w^{1}$ is $(2\pi i)^{s}/(s-1)!$. In other
words, there exists $c_{2}^{(s)},\dots,c_{s}^{(s)}\in\mathbb{C}$
such that
\[
\Psi_{s}(z)=\frac{(2\pi i)^{s}}{(s-1)!}w+\sum_{k=2}^{s}c_{k}^{(s)}w^{k}.
\]
\end{lem}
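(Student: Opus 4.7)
The plan is to reduce the whole statement to the single identity $\pi\cot(\pi z)=\pi i(1+2w)$ and then iterate differentiation in $w$. Applying Theorem \ref{thm:Bou}(ii) to the singleton index $\bm{k}=(s)$ gives the recurrence $\Psi_{s+1}(z)=-\tfrac{1}{s}\Psi_s'(z)$. Starting from the classical identity $\Psi_2(z)=\pi^2/\sin^2(\pi z)=-\tfrac{d}{dz}(\pi\cot(\pi z))$ and iterating yields
$$\Psi_s(z)=\frac{(-1)^{s-1}}{(s-1)!}\,\frac{d^{s-1}}{dz^{s-1}}\pi\cot(\pi z)\qquad(s\geq 2).$$

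Next, writing $\cot$ in exponential form gives $\cot(\pi z)=i\cdot\frac{e^{2\pi i z}+1}{e^{2\pi i z}-1}$, and using $1/(e^{2\pi i z}-1)=w$ we get $\pi\cot(\pi z)=\pi i+2\pi i\,w$. Consequently
$$\Psi_s(z)=\frac{(-1)^{s-1}\cdot 2\pi i}{(s-1)!}\cdot\frac{d^{s-1}w}{dz^{s-1}},$$
and everything is reduced to understanding $d^k w/dz^k$ as a polynomial in $w$. A direct computation using $e^{2\pi i z}=1+1/w$ yields the key identity $dw/dz=-2\pi i(w+w^2)$. From here I would prove by induction on $k\geq 0$ that $\tfrac{d^k w}{dz^k}=P_k(w)$ for some $P_k\in\mathbb{C}[w]$ of degree exactly $k+1$, with $P_k(0)=0$ and linear coefficient $(-2\pi i)^k$. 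The inductive step is just the chain rule $P_{k+1}(w)=-2\pi i(w+w^2)P_k'(w)$: the factor $w(1+w)$ raises the degree by exactly one, kills any constant term that would appear, and scales the linear coefficient by $-2\pi i$.

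Taking $k=s-1$ and substituting back produces $\Psi_s(z)=\frac{(2\pi i)^s}{(s-1)!}w+\sum_{j=2}^{s}c_j^{(s)}w^j$, which is exactly the claim. Nothing in this argument is genuinely hard; the only step requiring a little attention is the bookkeeping in the induction on $P_k$, where one must check that no spurious constant term is generated and that the degree grows by exactly one at each stage, both of which are immediate from the explicit factor $w(1+w)$ in $dw/dz$.
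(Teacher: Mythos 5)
Your proof is correct and is essentially the paper's argument in a lightly repackaged form: the paper inducts directly on $\Psi_{s}$ as a polynomial in $w$ using the recurrence $\Psi_{s+1}=-\tfrac{1}{s}\Psi_{s}'$, the base case $\Psi_{2}=-\tfrac{d}{dz}\pi\cot(\pi z)=(2\pi i)^{2}(w+w^{2})$, and the identity $\tfrac{dw}{dz}=-2\pi i(w+w^{2})$, which are exactly the ingredients you use. Shifting the induction onto $\tfrac{d^{k}w}{dz^{k}}$ instead of onto $\Psi_{s}$ itself is only a cosmetic reorganization, and your bookkeeping of the constant term, linear coefficient, and degree is accurate.
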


\begin{proof}
We prove by induction on $s$. Note that 
\[
\frac{dw}{dz}=-2\pi i(w+w^{2}).
\]
The case $s=2$ follows from
\begin{align*}
\Psi_{2}(z) & =\sum_{m=-\infty}^{\infty}\frac{1}{(z+m)^{2}}=-\frac{d}{dz}\sum_{m=-\infty}^{\infty}\left(\frac{1}{z+m}-\frac{1}{m}\right)=-\frac{d}{dz}\pi\cot(\pi z)\\
 & =-\pi i\frac{d}{dz}\left(\frac{e^{2\pi iz}+1}{e^{2\pi iz}-1}\right)=-\pi i\frac{d}{dz}\left(1+2w\right)=(2\pi i)^{2}(w+w^{2}).
\end{align*}
Let $m\geq2$. Assume that the claim holds for $s=m$, i.e.,
\[
\Psi_{m}(z)=\frac{(2\pi i)^{m}}{(m-1)!}w+\sum_{k=2}^{m}c_{k}^{(m)}w^{k}.
\]
Then,
\begin{align*}
\Psi_{m+1}(z) & =-\frac{1}{m}\frac{d}{dz}\Psi_{m}(z)\\
 & =-\frac{1}{m}\frac{d}{dz}\left(\frac{(2\pi i)^{m}}{(m-1)!}w+\sum_{k=2}^{m}c_{k}^{(m)}w^{k}\right)\\
 & =\frac{(2\pi i)^{m+1}}{m!}(w+w^{2})+\frac{2\pi i}{m}\sum_{k=2}^{m}c_{k}^{(m)}k(w^{k}+w^{k+1}).
\end{align*}
Thus, the claim holds for $s=m+1$. Therefore, the claim holds for
all $s\geq2$ by induction.
\end{proof}
\begin{example}
The following is an example of the lemma for small $s$. Put $w=1/(e^{2\pi iz}-1)$.
Then
\begin{align*}
\Psi_{2}(z) & =(2\pi i)^{2}\left(w+w^{2}\right),\\
\Psi_{3}(z) & =\frac{(2\pi i)^{3}}{2}\left(w+3w^{2}+2w^{3}\right),\\
\Psi_{4}(z) & =\frac{(2\pi i)^{4}}{6}\left(w+7w^{2}+12w^{3}+6w^{4}\right).
\end{align*}
\end{example}

By Lemma \ref{lem:psi_expand}, $\rho_{z}(\zeta_{\shuffle}^{S}(\bm{l};T))$
is a polynomial of $w=1/(e^{2\pi iz}-1)$. The coefficient of $w^{1}$
is given by the following lemma. 

\begin{lem}
\label{lem:coeff_is_RS}Let $\bm{l}=(l_{1},\dots,l_{r})\in\mathbb{Z}_{\geq1}^{r}$.
Put $w=1/(e^{2\pi iz}-1)$. Then the coefficient of $w^{1}$ in $\rho_{z}(\zeta_{\shuffle}^{S}(\bm{l};T))$
is given by $(2\pi i)^{2}\zeta^{RS}(\bm{l})$, i.e.,
\[
\rho_{z}(\zeta_{\shuffle}^{S}(\bm{l};T))=(2\pi i)^{2}\zeta^{RS}(\bm{l})w+w^{2}P_{\bm{l}}(w)
\]
for some polynomial $P_{\bm{l}}(T)\in\mathbb{C}[T]$.
\end{lem}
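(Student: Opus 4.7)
The plan is to extract the $w^1$ coefficient of $\rho_z(\zeta_\shuffle^S(\bm{l};T))$ directly from the expansion in Lemma \ref{lem:psi_expand} and then recognize the resulting linear functional in $\zeta_\shuffle^S(\bm{l};T)$ as the integral representation of $\zeta^{RS}(\bm{l})$ given in \eqref{eq:zeta_RS}. Since $\rho_z$ is $\mathbb{R}$-linear, it suffices to check the identity monomial by monomial on $T^s$.

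First, I would write $\zeta_\shuffle^S(\bm{l};T)=\sum_{s\ge 0}a_s T^s\in\mathbb{R}[T]$ and compute $\rho_z(T^s)=s!\,\Psi_{s+2}(z)$. By Lemma \ref{lem:psi_expand}, the coefficient of $w^1$ in $\Psi_{s+2}(z)$ is $(2\pi i)^{s+2}/(s+1)!$, so the coefficient of $w^1$ in $\rho_z(T^s)$ is $s!\cdot(2\pi i)^{s+2}/(s+1)!=(2\pi i)^{s+2}/(s+1)$. Summing over $s$, the $w^1$-coefficient of $\rho_z(\zeta_\shuffle^S(\bm{l};T))$ equals
\[
\sum_{s\ge 0}a_s\,\frac{(2\pi i)^{s+2}}{s+1}=(2\pi i)^2\sum_{s\ge 0}\frac{a_s(2\pi i)^s}{s+1}.
\]

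Next I would recognize the right-hand factor as $\zeta^{RS}(\bm{l})$ via \eqref{eq:zeta_RS}. Indeed,
\[
\zeta^{RS}(\bm{l})=\frac{1}{2\pi i}\int_0^{2\pi i}\zeta_\shuffle^S(\bm{l};T)\,dT=\frac{1}{2\pi i}\sum_{s\ge 0}a_s\,\frac{(2\pi i)^{s+1}}{s+1}=\sum_{s\ge 0}\frac{a_s(2\pi i)^s}{s+1},
\]
so the $w^1$-coefficient equals $(2\pi i)^2\zeta^{RS}(\bm{l})$, as desired. The fact that the remaining terms form a polynomial starting with $w^2$ is immediate from Lemma \ref{lem:psi_expand}, which tells us each $\Psi_{s+2}(z)$ has vanishing constant term, so $\rho_z(\zeta_\shuffle^S(\bm{l};T))\in w\,\mathbb{C}[w]$, and we have just extracted its $w^1$-part.

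There is essentially no obstacle here: once Lemma \ref{lem:psi_expand} is in hand, the only substantive observation is that the linear map $T^s\mapsto\tfrac{(2\pi i)^s}{s+1}$ coincides with $\tfrac{1}{2\pi i}\int_0^{2\pi i}(\cdot)\,dT$, which is exactly the definition of $\zeta^{RS}$ used in \eqref{eq:zeta_RS}. The main thing to be careful about is the factorial bookkeeping between the definition $\rho_z(T^s)=s!\,\Psi_{s+2}(z)$ and the leading coefficient $(2\pi i)^{s+2}/(s+1)!$ of $\Psi_{s+2}(z)$, which conspire to produce exactly the moment $(2\pi i)^{s+2}/(s+1)$ needed for the integral identity.
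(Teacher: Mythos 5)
Your proof is correct and follows essentially the same route as the paper: extract the $w^{1}$-coefficient monomial by monomial via Lemma \ref{lem:psi_expand}, observe that the factorials combine to give the moment $(2\pi i)^{s+2}/(s+1)$, and identify the resulting sum with $2\pi i\int_{0}^{2\pi i}\zeta_{\shuffle}^{S}(\bm{l};T)\,dT=(2\pi i)^{2}\zeta^{RS}(\bm{l})$ using (\ref{eq:zeta_RS}). The only cosmetic difference is that the paper writes the identification through the integral $\int_{0}^{2\pi i}T^{k}\,dT$ rather than evaluating it explicitly as you do.
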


\begin{proof}
Let $Q(T)=\sum_{k}c_{k}T^{k}\in\mathbb{C}[T]$ be any polynomial.
Then
\[
\rho_{z}(Q(T))=\sum_{k}c_{k}k!\Psi_{k+2}(z).
\]
Thus, by Lemma \ref{lem:psi_expand}, the coefficient of $w^{1}$
in $\rho_{z}(Q(T))$ is given by
\[
\sum_{k}c_{k}k!\frac{(2\pi i)^{k+2}}{(k+1)!}=\sum_{k}c_{k}\frac{(2\pi i)^{k+2}}{k+1}=2\pi i\sum_{k}c_{k}\int_{0}^{2\pi i}T^{k}dT=2\pi i\int_{0}^{2\pi i}Q(T)dT.
\]
Thus, by considering the case $Q(T)=\zeta_{\shuffle}^{S}(\bm{l};T)$,
the coefficient of $w^{1}$ in $\rho_{z}(\zeta_{\shuffle}^{S}(\bm{l};T))$
is given by
\[
2\pi i\int_{0}^{2\pi i}\zeta_{\shuffle}^{S}(\bm{l};T)dT,
\]
which equals to
\[
(2\pi i)^{2}\zeta^{RS}(\bm{l})
\]
by (\ref{eq:zeta_RS}).
\end{proof}
Now, we can prove the last main theorem.
\begin{proof}[Theorem \ref{thm:main-Kawashima}]
By Theorem \ref{thm:Bou} (iv) and Theorem \ref{thm:main}, we have
\begin{equation}
\rho_{z}\left(\zeta_{\shuffle}^{S}\left(\left(_{\downarrow}(\bm{k}*\bm{l})_{\downarrow}\right)^{\vee};T\right)\right)=\rho_{z}\left(\zeta_{\shuffle}^{S}\left(\left(_{\downarrow}\bm{k}_{\downarrow}\right)^{\vee};T\right)\right)\cdot\rho_{z}\left(\zeta_{\shuffle}^{S}\left(\left(_{\downarrow}\bm{l}{}_{\downarrow}\right)^{\vee};T\right)\right).\label{eq:zeta_S_prod}
\end{equation}
Then both sides are polynomial of $w=1/(e^{2\pi iz}-1)$. The coefficients
of $w^{1}$ in the left-hand side is 
\[
(2\pi i)^{2}\zeta^{RS}\left(\left(_{\downarrow}(\bm{k}*\bm{l})_{\downarrow}\right)^{\vee}\right)
\]
by Lemma \ref{lem:coeff_is_RS}, and the coefficient of $w^{1}$ in
the right-hand side is $0$ by Lemma \ref{lem:psi_expand}. Thus,
\[
\zeta^{RS}\left(\left(_{\downarrow}(\bm{k}*\bm{l})_{\downarrow}\right)^{\vee}\right)=0,
\]
which completes the proof.
\end{proof}
\begin{rem}
Theorem \ref{thm:main-Kawashima} can be regarded as an analogue of
the linear part of Kawashima's relation \cite[Corollary 5.5]{kawashima}
in the following sense. Let
\[
\hat{\zeta}^{\star}(k_{1},\dots,k_{r})=(-1)^{r}\zeta^{\star}(k_{1},\dots,k_{r})
\]
where $\zeta^{\star}(k_{1},\dots,k_{r})$ is the multiple zeta star
values $\sum_{0<m_{1}\leq\cdots\leq m_{r}}m_{1}^{-k_{1}}\cdots m_{r}^{-k_{r}}$.
For an index $\bm{k}=(k_{1},\dots,k_{r})\in\mathbb{Z}_{\geq1}^{r}$,
define $\bm{k}_{\uparrow}$ and the dual index $\bm{k}^{\dagger}$
(when $k_{r}\geq2$) by
\[
(k_{1},\dots,k_{r})_{\uparrow}=(k_{1},\dots,k_{r-1},k_{r}+1)
\]
\[
(\overbrace{1,\dots,1}^{a_{1}-1},b_{1}+1,\dots,\overbrace{1,\dots,1}^{a_{s}-1},b_{s}+1)^{\dagger}=(\overbrace{1,\dots,1}^{b_{s}-1},a_{s}+1,\dots,\overbrace{1,\dots,1}^{b_{1}-1},a_{1}+1)^{\dagger}.
\]
Then the linear part of Kawashima's relation and Theorem \ref{thm:main-Kawashima}
have similar expressions:
\begin{align*}
\hat{\zeta}^{\star}(((\bm{k}*\bm{l})_{\uparrow})^{\dagger}) & =0\qquad(\text{the linear part of Kawashima's relation}),\\
\zeta^{RS}\left(\left(_{\downarrow}(\bm{k}*\bm{l})_{\downarrow}\right)^{\vee}\right) & =0\qquad(\text{Theorem \ref{thm:main-Kawashima}}).
\end{align*}
Furthermore, $\hat{\zeta}^{\star}$ and $\zeta^{RS}$ have the following
similarities. 
\end{rem}

\begin{itemize}
\item They satisfy the harmonic product formula
\begin{align*}
\hat{\zeta}^{\star}(\bm{k}*\bm{l}) & =\hat{\zeta}^{\star}(\bm{k})\hat{\zeta}^{\star}(\bm{l}),\\
\zeta^{RS}(\bm{k}*\bm{l}) & =\zeta^{RS}(\bm{k})\zeta^{RS}(\bm{l}).
\end{align*}
\item They do not satisfy simple two-term duality relation, but, their star-sums
\begin{align*}
\sum_{\square\in\{\,'+',\,','\,\}}\hat{\zeta}^{\star}(k_{1}\square\cdots\square k_{r}) & =(-1)^{r}\zeta(k_{1},\dots,k_{r}),\\
\sum_{\square\in\{\,'+',\,','\,\}}\zeta^{RS}(k_{1}\square\cdots\square k_{r}) & =\zeta^{RS,\star}(k_{1},\dots,k_{r}),
\end{align*}
satisfy simple two-term duality relations
\begin{align*}
\zeta(\bm{k}^{\dagger}) & =\zeta(\bm{k}),\\
\zeta^{RS,\star}(\bm{k}^{\vee}) & =-\overline{\zeta^{RS,\star}(\bm{k})}.
\end{align*}
\end{itemize}
\begin{rem}
We can obtain a more general formula by taking the coefficient of
$w^{j}$ in (\ref{eq:zeta_S_prod}) for general $j\geq1$. It might
be an analogue of (a general form of) Kawashima's relation \cite[Corollary 5.4]{kawashima}.
\end{rem}

\subsection*{Acknowledgements}

This work was supported by JSPS KAKENHI Grant Numbers JP18K13392 and
JP22K03244.

\end{document}